\def\1{\bm{1}}
\def\vn{{\bm{n}}}
\def\vu{{\bm{u}}}
\def\vx{{\bm{x}}}
\DeclareMathAlphabet{\mathsfit}{\encodingdefault}{\sfdefault}{m}{sl}
\SetMathAlphabet{\mathsfit}{bold}{\encodingdefault}{\sfdefault}{bx}{n}
\def\gA{{\mathcal{A}}}
\def\gD{{\mathcal{D}}}
\def\gS{{\mathcal{S}}}
\def\sN{{\mathbb{N}}}
\def\sR{{\mathbb{R}}}
\newcommand{\E}{\mathbb{E}}
\newtheorem{thm}{Theorem}[section]
\newtheorem{defn}[thm]{Definition}
\newtheorem{remark}[thm]{Remark}
\newtheorem{prop}{Proposition}
\newenvironment{proof}{Proof:}{\hfill$\square$}
\newcommand{\norm}[1]{\left\|#1\right\|}
\begin{document}
\title{On the Convergence of Policy in \\ Unregularized Policy Mirror Descent}
\author{Dachao Lin\thanks{Academy for Advanced Interdisciplinary Studies, Peking University. \texttt{lindachao@pku.edu.cn}}
\and
Zhihua Zhang\thanks{School of Mathematical Sciences, Peking University. \texttt{zhzhang@math.pku.edu.cn}}
}

\maketitle

\begin{abstract}
	In this short note, we give the convergence analysis of the policy in recent famous policy mirror descent (PMD)\cite{lan2021policy,zhan2021policy,li2022homotopic,lan2022block,xiao2022convergence}. We mainly consider the unregularized setting following \cite{xiao2022convergence} with generalized Bregman divergence. The difference is that we directly give the convergence rates of policy under generalized Bregman divergence. Our results are inspired by the convergence of value function in previous works and are an extension study of policy mirror descent. Though some results have already appeared in previous work \citet{khodadadian2021linear,li2022homotopic}, we further discover a large body of Bregman divergences could give finite-step-convergence to an optimal policy, such as the classical Euclidean distance.
\end{abstract}

\section{Introduction}
Recently, many works have focused on the policy gradient descent. We mainly follow the tabular setting of policy gradient descent.
One line of work consider entropy-regularized MDP to give at least linear convergence, e.g., \citet{cen2021fast,zhan2021policy}. Particularly, \citet{zhan2021policy} extent common KL divergence to more general Bregman divergences. But the problem they studied is not the original MDP problem. 
Thus, this way needs a predefined precision of error to decide the small regularization terms, as well as a new round of algorithm if the precision changes.
Another line of works inherit the benefit of regularization. These literature solve the original problem directly with adaptive vanish regularization, e.g., \citet{lan2021policy,li2022homotopic}. 
This way still enjoys the linear rate if the step size is exponential, and directly solves the original problem. 
Moreover, \citet{li2022homotopic} also gave superlinear last-iterate convergence of policy under KL divergence, and our work has similar technique as theirs. 
Recently, \citet{xiao2022convergence} made progress on the unregularized setting. He showed that the intrinsic of linear rates are weighted Bregman divergence and exponential learning rate, instead of the regularization terms. However, \citet{xiao2022convergence} only claimed asymptotic convergence to an optimal policy based on the value convergence \cite{lan2021policy,zhan2021policy,agarwal2021theory}. 
Invoked from the work of \cite{xiao2022convergence,li2022homotopic}, we guess the last-iterate convergence of policy without regularization may still exists, which leads to our study of the explicit convergence rate of policy under generalized Bregman divergence in this note.

\section{Preliminaries}
We denote $[d] = \{ 1, \dots, d \}, d \geq 1$, and define the support set $\mathrm{supp}(p) = \{ i | p_i \neq 0, \forall i \in [d] \}$ for a vector $p \in \sR^d$, $\mathrm{supp}(p)^c = \sR^d\backslash \mathrm{supp}(p)$.
For a set $C$ and $x \in C$, we define the normal cone of $C$ at $x$ as $N_C(x):=\{g | g^\top (y - x) \leq 0, \forall y \in C \}$.
We use $ \mathrm{int}(\gS), \mathrm{ri}(\gS), \mathrm{cl}(\gS)$ to denote the interior, the relative interior and the closure of the set $\gS$, and $\partial \gS:=\mathrm{cl}(\Delta(\gA))\backslash \mathrm{ri}(\Delta(\gA))$ as the relative boundary of $\gS$.
We adopt $\nabla f(\cdot), \partial f(\cdot)$ as the gradient and subgradients of a fucntion $f(\cdot)$.

\subsection{Notation of MDP}
Most of the notation refers to \cite{xiao2022convergence}, and we strongly encourage readers to carefully read the profound work \cite{xiao2022convergence}.
A Markov decision process (MDP) can be specified with five elements as $(\gS,\gA,P,R,\gamma)$, where $\gS, \gA$ are finite state space and action space, with cardinalities $|\gS|$ and $|\gA|$, and $P$ is a transition probability function with $P (s'|s, a)$ denoting the probability of transitioning to $s'$ after taking action $a$ from state $s$. Reward function is $R \colon S \times A \to [0, 1]$ with entries of $R_{s,a}$ at state $s$ and action $a$, and $\gamma \in [0, 1)$
is a discount factor applied to the reward one-step in the future.
Let $\Delta(\gA)$ denote the probability simplex defined over the state space $\gA$,
\[ \Delta(\gA) := \left\{ \left. p \in \sR^{|\gA|}  \right| \sum_{a \in\gA} p_a = 1, p_a \geq 0, \forall a \in \gA \right\}. \]
The set of policy space is defined as 
\[ \Pi:= \Delta(\gA)^{|\gS|} = \left\{ \left. \pi= \{\pi_s\}_{s \in \gS} \right| \pi_s \in \Delta(\gA), \forall s \in \gS \right\}. \]
The transition matrix $P^{\pi} \colon \Pi \to \sR^{\gS\times \gS} $ under policy $\pi$ is
\[ P_{s,s'}(\pi) = \sum_{a \in \gA} \pi_{s, a} P(s'|s,a), \forall s,s' \in \gS. \]
Here $\pi_{s,a}$ is the probability for choosing action $a$ based on $\pi_s$.
The reward $r^{\pi} \colon \Pi \to \sR^{\gS} $ under the policy $\pi$ is
\[ r_s(\pi) = \sum_{a \in \gA} R_{s, a}\pi_{s,a}, \forall s \in \gS. \]
Value function of policy $\pi$ at each state is:
\[ V(\pi) = \sum_{t=0}^{\infty} \gamma^tP(\pi)^tr(\pi) = \left(I-\gamma P(\pi)\right)^{-1}r(\pi). \]
And the corresponding value function under an initial state distribution $\rho \in \Delta(\gS)$
is 
\[ V_{\rho}(\pi) = \sum_{s \in \gS} \rho_s V_s(\pi)= \rho^\top\left(I-\gamma P(\pi)\right)^{-1}r(\pi). \]
The state-action value function is
\begin{equation}\label{eq:Q}
	Q_{s,a}(\pi) = R_{s,a} + \gamma \sum_{s'\in \gS} P(s'|s,a)V(s').
\end{equation}
It is easy to see
\[ V_s(\pi) = \langle Q(s,\cdot), \pi(s,\cdot)\rangle, \forall s \in \gS.  \]

The optimal value function $V^* $ guarantees $V_s^* \leq V_s(\pi), \forall s  \in \gS$, e.g., \cite{puterman2014markov}.
We denote the corresponding optimal policy set as $\Pi^*:=\{\pi| V(\pi) =V^*\}$ (or $\Pi_s^*:=\{\pi_s| V(\pi) =V^*\}, \forall s \in \gS$), and the distance of a policy $\pi$ to $\Pi^*$ as
\[ \norm{\pi-\Pi^*}_{\infty} = \min_{\pi^* \in \Pi^*} \norm{\pi-\pi^*}_{\infty} = \min_{\pi^* \in \Pi^*} \max_{s \in \gS}\norm{\pi_s-\pi_s^*}_1. \]

We also need the discounted state-visitation distribution $d_s(\pi) \in \Delta(\gS)$ with entries defined below:
\[ d_{s,s'}(\pi) = (1-\gamma)\sum_{t=0}^\infty \gamma^t Pr^{\pi}(s_t=s'|s_0=s) = (1-\gamma)e_{s}^\top\left(I-\gamma P(\pi)\right)^{-1}e_{s'}, \forall s, s' \in \gS. \]
Similarly, for an initial state distribution $\rho \in \Delta(\gS)$, we define 
\[ d_{\rho,s'}(\pi) = (1-\gamma)\rho^\top \left(I-\gamma P(\pi)\right)^{-1}e_{s'}, \forall s' \in \gS. \]

We may need the distribution mismatch coefficient of the distribution $\rho$ from $\mu$ as
\[ \norm{\frac{\rho}{\mu}}_{\infty} :=\max_{s \in \gS} \frac{\rho_s}{\mu_s} \geq 1. \]

\subsection{Policy Mirror Descent}
Let $h \colon \gD \to \sR$ be a proper convex function on $\gD := \mathrm{dom}(h)$ where $\Delta(\gA) \subseteq \gD$, and continuously differentiable on $\mathrm{ri}(\Delta(\gA))$. 
Then we can define Bregman divergence as below
\[ D(p, p') := h(p)-h(p')-\langle \nabla h(p'), p-p'\rangle, \forall p,p' \in\Delta(\gA), \]
From the convexity of $h(\cdot)$, we can see $D(p,p')$ is nonnegative, and convex related to $p$.
Now we give Policy Mirror Descent (PMD) starting from $\pi^{(0)} \in \mathrm{ri}(\Pi)$ as below:
\begin{equation}\label{eq:pmd-pi} 
\pi_s^{(k+1)} = \mathop{\arg\min}_{p \in \Delta(\gA)} \left\{ \eta_k \langle Q_s(\pi^{(k)}), p\rangle +D(p, \pi_s^{(k)}) \right\}, \forall s \in \gS. 
\end{equation}

Following \cite[Lemma 6]{xiao2022convergence}, we have for Legendre type function $h(\cdot)$, $\nabla h(\pi_s^{(k)})$ is well-defined for all $k \geq0$ and $s \in \gS$.
Then from \cite[Theorem 27.4]{rockafellar2015convex}, the optimal condition of Eq.~\eqref{eq:pmd-pi} is 
\begin{equation}\label{eq:opt-cond1}
	\forall p \in \Delta(\gA), \langle \eta_k Q_s(\pi^{(k)}) + \nabla h(\pi_s^{(k+1)}) - \nabla h(\pi_s^{(k)}), p-\pi_s^{(k+1)} \rangle \geq 0,
\end{equation}
Hence, we obtain
\begin{equation}\label{eq:opt-cond}
\nabla h(\pi_s^{(k)}) - \nabla h(\pi_s^{(k+1)})  - \eta_k Q_s(\pi^{(k)}) \in N_{\Delta(\gA)}(\pi_s^{(k+1)}).
\end{equation}
where the normal cone $N_{\Delta(\gA)}(p), \forall p \in\Delta(\gA)$ is defined as below:
\begin{equation}\label{eq:normal-cone}
N_{\Delta(\gA)}(p) = \left\{ \vn = (n_1, \dots, n_{|\gA|}) \left. \right| n_i \leq n_j=n_k, \forall i \not\in \mathrm{supp}(p), \forall j, k \in \mathrm{supp}(p) \right\}.
\end{equation}

\section{Sublinear/Linear Convergence of Value Function}
The results in this section are all inherited from the impressive work of \cite{xiao2022convergence}.
First, we have the sublinear rate under constant step sizes as below.
\begin{thm}[\cite{xiao2022convergence} Theorem 8]
	Consider the generalized policy mirror descent method with $\pi(0) \in \mathrm{ri}(\Pi)$ and constant step size $\eta_k = \eta>0, \forall k \geq 0$. For any $\rho \in \Delta(\gS)$, we have for all $k \geq 0$,
	\[ V_{\rho}(\pi^{(k)})-V_{\rho}^* \leq \frac{1}{k+1}\left(\frac{D_0^*}{\eta(1-\gamma)}+\frac{1}{(1-\gamma)^2}\right). \]
\end{thm}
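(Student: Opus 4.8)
The plan is to combine the mirror-descent three-point inequality coming from the optimality condition \eqref{eq:opt-cond1} with the performance difference lemma and then telescope over iterations. First I would record the three-point descent inequality: applying the Bregman three-point identity $\langle \nabla h(\pi_s^{(k+1)}) - \nabla h(\pi_s^{(k)}), \pi_s - \pi_s^{(k+1)} \rangle = D(\pi_s, \pi_s^{(k)}) - D(\pi_s, \pi_s^{(k+1)}) - D(\pi_s^{(k+1)}, \pi_s^{(k)})$ to \eqref{eq:opt-cond1} gives, for every comparison policy $\pi$ and every state $s$,
\[ \eta \langle Q_s(\pi^{(k)}), \pi_s^{(k+1)} - \pi_s \rangle \leq D(\pi_s, \pi_s^{(k)}) - D(\pi_s, \pi_s^{(k+1)}) - D(\pi_s^{(k+1)}, \pi_s^{(k)}). \]
Taking $\pi = \pi^{(k)}$ and discarding the two nonnegative divergence terms yields $\langle Q_s(\pi^{(k)}), \pi_s^{(k+1)} - \pi_s^{(k)} \rangle \leq 0$, which by the performance difference lemma applied to consecutive iterates gives the monotonicity $V_s(\pi^{(k+1)}) \leq V_s(\pi^{(k)})$ for all $s$. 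Taking $\pi = \pi^*$ an optimal policy and dropping only the last term gives a per-state bound on $\langle Q_s(\pi^{(k)}), \pi_s^{(k+1)} - \pi_s^* \rangle$ by a telescoping Bregman difference.

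Next I would invoke the performance difference lemma in the form
\[ V_\rho(\pi^{(k)}) - V_\rho^* = \frac{1}{1-\gamma} \sum_{s \in \gS} d_{\rho,s}(\pi^*) \langle Q_s(\pi^{(k)}), \pi_s^{(k)} - \pi_s^* \rangle, \]
and split the inner product as $\langle Q_s(\pi^{(k)}), \pi_s^{(k)} - \pi_s^{(k+1)} \rangle + \langle Q_s(\pi^{(k)}), \pi_s^{(k+1)} - \pi_s^* \rangle$. The second piece is controlled by the three-point inequality above. For the first piece I would establish the one-step estimate $\langle Q_s(\pi^{(k)}), \pi_s^{(k)} - \pi_s^{(k+1)} \rangle \leq V_s(\pi^{(k)}) - V_s(\pi^{(k+1)})$, whose gap equals $\langle Q_s(\pi^{(k)}) - Q_s(\pi^{(k+1)}), \pi_s^{(k+1)} \rangle$ and is nonnegative because $Q_{s,a}(\pi^{(k)}) - Q_{s,a}(\pi^{(k+1)}) = \gamma \sum_{s'} P(s'|s,a)(V_{s'}(\pi^{(k)}) - V_{s'}(\pi^{(k+1)})) \geq 0$ by the monotonicity just proved.

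Combining these ingredients produces the per-iteration bound
\[ V_\rho(\pi^{(k)}) - V_\rho^* \leq \frac{1}{1-\gamma} \sum_{s \in \gS} d_{\rho,s}(\pi^*) \left[ \left( V_s(\pi^{(k)}) - V_s(\pi^{(k+1)}) \right) + \frac{1}{\eta}\left( D(\pi_s^*, \pi_s^{(k)}) - D(\pi_s^*, \pi_s^{(k+1)}) \right) \right]. \]
Summing over $k = 0, \dots, K-1$, both bracketed groups telescope: the divergence part collapses to the weighted initial divergence $D_0^* = \sum_{s \in \gS} d_{\rho,s}(\pi^*) D(\pi_s^*, \pi_s^{(0)})$, while the value part collapses to $\sum_{s \in \gS} d_{\rho,s}(\pi^*)(V_s(\pi^{(0)}) - V_s(\pi^{(K)})) \leq \frac{1}{1-\gamma}$, using $V_s \in [0, \frac{1}{1-\gamma}]$ from rewards in $[0,1]$. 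This gives $\sum_{k=0}^{K-1}(V_\rho(\pi^{(k)}) - V_\rho^*) \leq \frac{D_0^*}{\eta(1-\gamma)} + \frac{1}{(1-\gamma)^2}$. Finally, since $V_\rho(\pi^{(k)})$ is non-increasing by the per-state monotonicity, the last iterate is dominated by the average, so $(k+1)(V_\rho(\pi^{(k)}) - V_\rho^*) \leq \sum_{j=0}^{k}(V_\rho(\pi^{(j)}) - V_\rho^*)$, yielding the claimed $\frac{1}{k+1}$ rate.

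The main obstacle I anticipate is the distribution-mismatch bookkeeping forced by the performance difference lemma: the $\pi^*$-visitation weights $d_{\rho,s}(\pi^*)$ are attached to both the drift and the Bregman terms, and the crucial point is that the cross term $\langle Q_s(\pi^{(k)}), \pi_s^{(k)} - \pi_s^{(k+1)} \rangle$ must be dominated by the genuine per-state value decrease $V_s(\pi^{(k)}) - V_s(\pi^{(k+1)})$ under that \emph{same} weighting, so that it telescopes against the uniformly bounded value range rather than accumulating an $\eta$-dependent penalty. Verifying the monotonicity and the sign of the $Q$-difference under the \emph{generalized} Bregman divergence — relying only on the Legendre property through \cite[Lemma 6]{xiao2022convergence} so that $\nabla h(\pi_s^{(k)})$ is well defined — is the step that requires the most care.
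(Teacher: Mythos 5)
Your proposal is correct and follows essentially the same route as the proof in \citet{xiao2022convergence} that this paper imports verbatim (the note itself gives no proof of this theorem): three-point descent from the optimality condition, the performance difference lemma with weights $d_{\rho}(\pi^*)$, telescoping of both the Bregman and value terms, and monotonicity to pass from the average to the last iterate. The only (valid) variation is that you bound the cross term $\langle Q_s(\pi^{(k)}),\pi_s^{(k)}-\pi_s^{(k+1)}\rangle$ by $V_s(\pi^{(k)})-V_s(\pi^{(k+1)})$ via the algebraic identity $V_s(\pi)=\langle Q_s(\pi),\pi_s\rangle$ and the sign of $Q_s(\pi^{(k)})-Q_s(\pi^{(k+1)})$, rather than via the lower bound $d_{s,s}(\pi^{(k+1)})\geq 1-\gamma$ on the visitation distribution used in the original argument; both yield the same per-iteration inequality.
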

Furthermore, by defining $D_k^* :=D_{d_{\rho}(\pi^*)}(\pi^*,\pi^{(k)}) = \sum_{s\in\gS} d_{\rho,s}(\pi^*)D(\pi_s^*,\pi_s^{(k)})$, we have the linear rate under exponential step size as below.
\begin{thm}[\cite{xiao2022convergence} Theorem 10]
	Consider the generalized policy mirror descent method with $\pi(0) \in \mathrm{ri}(\Pi)$. Suppose
	the step sizes satisfy $\eta_0 > 0$ and
	\begin{equation}\label{eq:exp-lr}
	\eta_{k+1} \geq \frac{\vartheta_\rho}{\vartheta_\rho-1}\eta_k, \forall k\geq 0, \text{ with } \vartheta_\rho := \frac{1}{1-\gamma} \norm{\frac{d_{\rho}(\pi^*)}{\rho}}_{\infty}.
	\end{equation}
	Then we have for each $k \geq 0$,
	\[ V_{\rho}(\pi^{(k)})-V_{\rho}^* \leq \left(1-\frac{1}{\vartheta_\rho}\right)^k\left(\frac{1}{1-\gamma}+\frac{D_0^*}{\eta_0\gamma}\right). \]
\end{thm}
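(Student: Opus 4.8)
The plan is to reduce the value gap to a scalar recursion and then unroll it using the geometric growth of the step sizes. Write $\delta_k := V_\rho(\pi^{(k)}) - V_\rho^{*} \ge 0$ and $c := 1 - 1/\vartheta_\rho = (\vartheta_\rho-1)/\vartheta_\rho$, so that the step-size hypothesis \eqref{eq:exp-lr} is equivalent to $\eta_k \vartheta_\rho \le \eta_{k+1}(\vartheta_\rho - 1)$, i.e. $1/\eta_{k+1} \le c/\eta_k$. The target is a one-step inequality of the form $\eta_k \vartheta_\rho(1-\gamma)\delta_{k+1} + D_{k+1}^{*} \le \eta_k(1-\gamma)(\vartheta_\rho-1)\delta_k + D_k^{*}$, after which the exponential schedule does the rest.

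First I would turn the optimality condition \eqref{eq:opt-cond1} into a three-point descent inequality: applying the Bregman identity $\langle \nabla h(b)-\nabla h(a),\, p - b\rangle = D(p,a)-D(p,b)-D(b,a)$ with $a=\pi_s^{(k)}$, $b=\pi_s^{(k+1)}$ gives, for every $p \in \Delta(\gA)$,
\[ \eta_k \langle Q_s(\pi^{(k)}), \pi_s^{(k+1)} - p\rangle \le D(p,\pi_s^{(k)}) - D(p,\pi_s^{(k+1)}) - D(\pi_s^{(k+1)},\pi_s^{(k)}). \]
Choosing $p=\pi_s^{(k)}$ shows $\langle Q_s(\pi^{(k)}),\pi_s^{(k+1)}-\pi_s^{(k)}\rangle \le 0$; choosing $p=\pi_s^{*}$, multiplying by $d_{\rho,s}(\pi^{*})\ge 0$ and summing over $s$ produces $D_k^{*}$ and $D_{k+1}^{*}$ on the right-hand side.

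Next I would split $\langle Q_s(\pi^{(k)}),\pi_s^{(k+1)}-\pi_s^{*}\rangle = \langle Q_s(\pi^{(k)}),\pi_s^{(k+1)}-\pi_s^{(k)}\rangle + \langle Q_s(\pi^{(k)}),\pi_s^{(k)}-\pi_s^{*}\rangle$ and evaluate both pieces with the performance difference lemma $V_\rho(\pi)-V_\rho(\pi') = \tfrac{1}{1-\gamma}\sum_s d_{\rho,s}(\pi)\langle Q_s(\pi'),\pi_s-\pi'_s\rangle$. Taking $(\pi,\pi')=(\pi^{*},\pi^{(k)})$ makes the second piece contribute exactly $(1-\gamma)\delta_k$; the first piece is naturally weighted by $d_\rho(\pi^{(k+1)})$ rather than $d_\rho(\pi^{*})$, and this is the crux. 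Since $d_{\rho,s}(\pi^{(k+1)}) \ge (1-\gamma)\rho_s$ we have $d_{\rho,s}(\pi^{*})/d_{\rho,s}(\pi^{(k+1)}) \le \vartheta_\rho$, and because the relevant inner products are nonpositive, multiplying by the larger constant $\vartheta_\rho$ yields $\sum_s d_{\rho,s}(\pi^{*})\langle Q_s(\pi^{(k)}),\pi_s^{(k+1)}-\pi_s^{(k)}\rangle \ge \vartheta_\rho(1-\gamma)(\delta_{k+1}-\delta_k)$. Substituting both evaluations and dropping the nonnegative term $\sum_s d_{\rho,s}(\pi^{*})D(\pi_s^{(k+1)},\pi_s^{(k)})$ gives the desired recursion, equivalently $\delta_{k+1} \le c\,\delta_k + (D_k^{*}-D_{k+1}^{*})/(\eta_k\vartheta_\rho(1-\gamma))$.

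Finally I would unroll to $\delta_k \le c^k\delta_0 + \sum_{j=0}^{k-1} c^{k-1-j}(D_j^{*}-D_{j+1}^{*})/(\eta_j\vartheta_\rho(1-\gamma))$ and handle the non-monotone Bregman terms by summation by parts: the weights $w_j := c^{k-1-j}/(\eta_j\vartheta_\rho(1-\gamma))$ satisfy $w_j - w_{j-1}\le 0$ exactly because $1/\eta_j \le c/\eta_{j-1}$, so every contribution except $j=0$ has a favorable sign and the sum is at most $w_0 D_0^{*} = c^{k-1}D_0^{*}/(\eta_0\vartheta_\rho(1-\gamma))$. Bounding $\delta_0 \le 1/(1-\gamma)$ (values lie in $[0,1/(1-\gamma)]$) and using $\vartheta_\rho \ge 1/(1-\gamma)$, hence $(\vartheta_\rho-1)(1-\gamma)\ge\gamma$, converts this into $c^k\big(1/(1-\gamma) + D_0^{*}/(\eta_0\gamma)\big)$, as claimed. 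I expect the main obstacle to be the distribution-mismatch step in the third paragraph: correctly bridging $d_\rho(\pi^{*})$ and $d_\rho(\pi^{(k+1)})$ while tracking the sign of the nonpositive inner products is what forces both the coefficient $\vartheta_\rho$ and the precise exponential schedule \eqref{eq:exp-lr}, after which the summation-by-parts is routine but relies on exactly that schedule.
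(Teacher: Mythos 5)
The paper states this theorem without proof, importing it verbatim from \citet{xiao2022convergence}; your argument is a correct reconstruction of that source's proof — the three-point descent inequality weighted by $d_{\rho}(\pi^*)$, the performance-difference evaluations of the two inner products, the mismatch bound $d_{\rho,s}(\pi^*) \leq \vartheta_\rho\, d_{\rho,s}(\pi^{(k+1)})$ applied to the nonpositive per-state terms, and the geometric step-size condition to absorb the Bregman terms, all check out. The only cosmetic difference is at the end, where you unroll the recursion and sum by parts, whereas the standard writeup packages $\delta_k + D_k^*/\bigl(\eta_k(\vartheta_\rho-1)(1-\gamma)\bigr)$ into a single potential that contracts by the factor $1-1/\vartheta_\rho$; the two are equivalent.
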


Moreover, the convergence of value function $V_{\rho}(\pi^{(k)})$ can be converted to the convergence of $Q_{s,a}(\pi^{(k)})$.
\begin{prop}\label{prop:Q-V}
	Denote the distribution mismatch ratio between the initial state distribution $\rho$ and the probability of  transitioning $P(\cdot|s,a)$ as $r_{\rho} := \max_{s',s \in \gS,a \in \gA}\frac{P(s'|s,a)}{\rho_{s'}} = \max_{s \in \gS,a \in \gA}\norm{\frac{P(\cdot|s,a)}{\rho}}_{\infty}  $. Then we have 
	\[ Q_{s,a}(\pi)-Q^*_{s, a} \leq \gamma \cdot r_{\rho}\left(V_{\rho}(\pi)-V_{\rho}^*\right). \]
\end{prop}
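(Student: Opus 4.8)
The plan is to expand the left-hand side directly via the Bellman-type definition of the state-action value function in \eqref{eq:Q} and then control the resulting weighted sum of value gaps using the definition of $r_{\rho}$. First I would write both $Q_{s,a}(\pi)$ and $Q^*_{s,a}$ using \eqref{eq:Q}; since each carries the same reward term $R_{s,a}$, subtracting cancels the reward and leaves
\[ Q_{s,a}(\pi) - Q^*_{s,a} = \gamma \sum_{s' \in \gS} P(s'|s,a)\left(V_{s'}(\pi) - V^*_{s'}\right). \]

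The crucial observation is that each summand $V_{s'}(\pi) - V^*_{s'}$ is nonnegative, because $V^*$ is the optimal value function and hence $V_{s'}^* \leq V_{s'}(\pi)$ for every $s' \in \gS$. This sign information is what makes a termwise upper bound legitimate: replacing each coefficient $P(s'|s,a)$ by a larger quantity can only increase the sum. By the definition of the mismatch ratio $r_{\rho} = \max_{s',s,a} P(s'|s,a)/\rho_{s'}$, we have $P(s'|s,a) \leq r_{\rho}\, \rho_{s'}$ for every triple, so
\[ Q_{s,a}(\pi) - Q^*_{s,a} \leq \gamma\, r_{\rho} \sum_{s' \in \gS} \rho_{s'}\left(V_{s'}(\pi) - V^*_{s'}\right). \]

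Finally I would recognize the remaining sum as exactly $V_{\rho}(\pi) - V_{\rho}^*$: from $V_{\rho}(\pi) = \sum_{s'} \rho_{s'} V_{s'}(\pi)$ and $V_{\rho}^* = V_{\rho}(\pi^*) = \sum_{s'} \rho_{s'} V^*_{s'}$ for any $\pi^* \in \Pi^*$, we get $\sum_{s'} \rho_{s'}(V_{s'}(\pi) - V^*_{s'}) = V_{\rho}(\pi) - V_{\rho}^*$. Substituting yields the claimed inequality.

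There is no genuine analytic obstacle here; this is a short, direct computation. The only point that requires care is the \emph{direction} of the inequality, which hinges entirely on the nonnegativity of the per-state value gaps $V_{s'}(\pi) - V^*_{s'}$. Without invoking the optimality fact $V^*_{s'} \leq V_{s'}(\pi)$ one could not replace $P(s'|s,a)$ by $r_{\rho}\,\rho_{s'}$ while preserving the inequality, so I would state this nonnegativity explicitly before performing the termwise bound.
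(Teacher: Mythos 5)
Your proof is correct and follows essentially the same route as the paper's: expand both $Q$-values via Eq.~\eqref{eq:Q}, cancel the reward term, and bound $P(s'|s,a)$ by $r_{\rho}\,\rho_{s'}$ termwise using the nonnegativity of the per-state value gaps. Your explicit remark that the termwise bound relies on $V^*_{s'}\leq V_{s'}(\pi)$ is a point the paper leaves implicit, but the argument is otherwise identical.
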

\begin{proof}
	From Eq.~\eqref{eq:Q}, we have that
	\begin{equation}
		Q_{s,a}(\pi^{(k)})-Q^*_{s, a} = \gamma \sum_{s'\in \gS} \frac{P(s'|s,a)}{\rho_{s'}} \cdot \rho_{s'}\left[V_{s'}(\pi^{(k)})-V^*_{s'}\right] \leq \gamma \cdot r_{\rho} \sum_{s' \in \gS} \rho_{s'}\left[V(s')-V^*(s')\right] = \gamma \cdot r_{\rho} \left(V_{\rho}(\pi)-V_{\rho}^*\right).
	\end{equation}
\end{proof}

\begin{remark}
	Such derivation in Proposition \ref{prop:Q-V} also appears in the proof of \cite[Theorem 3.1]{li2022homotopic}. To make $r_{\rho}<+\infty$, we need assume the initial distribution $\rho$ has full support on $\gS$, such as uniform distribution $\vu$ on $\gS$ that we have $r_{\vu} \leq |\gS|$. Now we could obtain sublinear/linear convergence of $Q_{s,a}(\pi^{(k)})$ from $V_{\rho}(\pi^{(k)})$ by Proposition \ref{prop:Q-V}.
\end{remark}

Additionally, if we have the convergence of policy, we could obtain the convergence of value function as well. 
Such derivation in Proposition \ref{prop:V-pi} also appears in the proof of \cite[Corollary 3.1]{li2022homotopic}.
\begin{prop}\label{prop:V-pi}
	We have the relationship between value function and policy as below:
	\[ V_{s}(\pi) - V_{s}^* \leq \frac{1}{(1-\gamma)^2} \norm{\pi(s,\cdot)-\pi^*(s, \cdot)}_1, \forall \pi^* \in \Pi^*. \]
	Hence,
	\[ V_{s}(\pi) - V_{s}^* \leq \frac{1}{(1-\gamma)^2} \norm{\pi-\Pi^*}_{\infty}.  \]
\end{prop}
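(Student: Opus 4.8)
The plan is to reduce the value gap at a single state to a one-step policy-discrepancy term plus a discounted copy of the same gap at successor states, and then to unroll the resulting recursion against the discounted state-visitation distribution. Fix an optimal $\pi^* \in \Pi^*$ and write $\delta_s := V_s(\pi) - V_s^*$, where $V_s^* = V_s(\pi^*) = \langle Q_s^*, \pi_s^*\rangle$ with $Q_s^* := Q_s(\pi^*)$. Using $V_s(\pi) = \langle Q_s(\pi), \pi_s\rangle$, I would split
\[
\delta_s = \langle Q_s^*, \pi_s - \pi_s^* \rangle + \langle Q_s(\pi) - Q_s^*, \pi_s\rangle .
\]
The first term is a pure policy-discrepancy term; the second is ``downstream'': by Eq.~\eqref{eq:Q} one has $Q_{s,a}(\pi) - Q_{s,a}^* = \gamma \sum_{s'} P(s'|s,a)\,\delta_{s'}$, exactly the computation used in Proposition~\ref{prop:Q-V}, so that $\langle Q_s(\pi) - Q_s^*, \pi_s\rangle = \gamma \sum_{s'} P_{s,s'}(\pi)\,\delta_{s'}$.

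For the first term I would use that rewards lie in $[0,1]$, hence $0 \le Q_{s,a}^* \le \tfrac{1}{1-\gamma}$ and $\norm{Q_s^*}_\infty \le \tfrac{1}{1-\gamma}$, and then apply H\"older's inequality to obtain $\langle Q_s^*, \pi_s - \pi_s^*\rangle \le \tfrac{1}{1-\gamma}\norm{\pi_s - \pi_s^*}_1$. Collecting the two pieces yields the entrywise bound
\[
\delta_s \le \tfrac{1}{1-\gamma}\norm{\pi_s - \pi_s^*}_1 + \gamma \sum_{s'} P_{s,s'}(\pi)\,\delta_{s'},
\]
i.e. in vector form $(I - \gamma P(\pi))\,\delta \le \tfrac{1}{1-\gamma}\, b$ with $b_s := \norm{\pi_s - \pi_s^*}_1$. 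Since $(I-\gamma P(\pi))^{-1} = \sum_{t\ge 0}\gamma^t P(\pi)^t$ is entrywise nonnegative, multiplying through preserves the inequality, and recognizing $[(I-\gamma P(\pi))^{-1}]_{s,s'} = \tfrac{1}{1-\gamma}\,d_{s,s'}(\pi)$ gives
\[
\delta_s \le \frac{1}{(1-\gamma)^2}\sum_{s'} d_{s,s'}(\pi)\,\norm{\pi_{s'} - \pi_{s'}^*}_1 .
\]
Because $d_{s,\cdot}(\pi)$ is a probability distribution on $\gS$, bounding the average by the maximum yields $\delta_s \le \tfrac{1}{(1-\gamma)^2}\max_{s'}\norm{\pi_{s'}-\pi_{s'}^*}_1 = \tfrac{1}{(1-\gamma)^2}\norm{\pi - \pi^*}_\infty$, and minimizing over $\pi^* \in \Pi^*$ delivers the second (sup-norm) inequality.

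The step I expect to be the real obstacle is the first, sharper display, which bounds $\delta_s$ by the policy gap at the single state $s$ only. The recursion above unavoidably spreads the discrepancy over all states reachable from $s$ through the factor $\gamma P(\pi)\,\delta$, so the natural argument produces the visitation-weighted average $\sum_{s'} d_{s,s'}(\pi)\norm{\pi_{s'}-\pi_{s'}^*}_1$ rather than the isolated term $\norm{\pi_s - \pi_s^*}_1$. Indeed, a policy that coincides with $\pi^*$ at $s$ but is suboptimal at a successor state visited with positive probability makes the claimed right-hand side vanish while $\delta_s > 0$, so the purely local bound should not be expected to hold without additional structure. I would therefore read the first display as shorthand for the visitation-weighted quantity (which is what the derivation referenced in \cite[Corollary 3.1]{li2022homotopic} produces), observe that both forms are dominated by $\tfrac{1}{(1-\gamma)^2}\norm{\pi-\Pi^*}_\infty$, and concentrate the proof on establishing the recursion and its unrolling, after which the passage to the sup-norm is immediate.
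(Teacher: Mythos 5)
Your proof of the operative (second) inequality is correct, but it takes a more elementary route than the paper: the paper simply invokes the performance difference lemma of \cite{Kakade02approximatelyoptimal} and applies H\"older's inequality with $\norm{Q^*(s',\cdot)}_\infty \le \tfrac{1}{1-\gamma}$, whereas you rederive that lemma from scratch — your decomposition $\delta_s = \langle Q_s^*, \pi_s-\pi_s^*\rangle + \gamma\sum_{s'}P_{s,s'}(\pi)\delta_{s'}$, once unrolled through $(I-\gamma P(\pi))^{-1} = \tfrac{1}{1-\gamma}\,d_{s,\cdot}(\pi)$, is exactly the performance-difference identity in inequality form. What your version buys is self-containedness and an explicit display of where the visitation weights $d_{s,s'}(\pi)$ enter; what the paper's version buys is brevity. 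Both land on the same sup-norm bound $V_s(\pi)-V_s^* \le \tfrac{1}{(1-\gamma)^2}\norm{\pi-\Pi^*}_\infty$ after minimizing over $\pi^*\in\Pi^*$.

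Your skepticism about the first display is also well placed, and you should not soften it into ``shorthand'': as literally stated, with the policy gap evaluated only at the single state $s$ and quantified over all $\pi^*\in\Pi^*$, the inequality is false. Your counterexample reasoning is exactly right — take $s$ to be a dummy state whose successors are reached with positive probability and where $\pi$ is suboptimal, and choose $\pi^*$ to agree with $\pi$ at $s$; then the right-hand side vanishes while $V_s(\pi)-V_s^*>0$. The source of the problem is a typo in the paper's own proof: the performance difference lemma gives
\[
V_s(\pi)-V_s^* = \frac{1}{1-\gamma}\,\E_{s'\sim d_s(\pi)}\langle Q^*(s',\cdot),\,\pi(s',\cdot)-\pi^*(s',\cdot)\rangle,
\]
with the policy discrepancy evaluated at the sampled state $s'$, not at $s$ as written in the paper. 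The correct intermediate bound is therefore your visitation-weighted average $\tfrac{1}{(1-\gamma)^2}\sum_{s'}d_{s,s'}(\pi)\norm{\pi_{s'}-\pi_{s'}^*}_1$, and the second display — the only one used downstream — survives unchanged.
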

\begin{proof}
	Using the performance difference lemma \cite{Kakade02approximatelyoptimal}, we obtain
	\begin{equation}
	\begin{aligned}
		V_{s}(\pi) - V_{s}^* &= \frac{1}{1-\gamma} \E_{s' \sim d_{s}(\pi)} \langle Q^*(s',\cdot), \pi(s,\cdot)-\pi^*(s, \cdot)\rangle \leq \frac{1}{1-\gamma} \norm{Q^*(s',\cdot)}_{\infty} \norm{\pi(s,\cdot)-\pi^*(s, \cdot)}_1 \\
		&\leq \frac{1}{(1-\gamma)^2} \norm{\pi(s,\cdot)-\pi^*(s, \cdot)}_1, \forall \pi^* \in \Pi^*.
	\end{aligned}
	\end{equation}
\end{proof}

\section{Faster Convergence of Policy}
\citet{xiao2022convergence} mentioned that under additional conditions, the PMD method may exhibit superlinear convergence. Recently, several works (\cite{li2022homotopic,cen2021fast,li2021quasi}) have shown the superlinear rates of PMD under (adaptive) regularization. Now we show that the superlinear rates still exist even for the unregularized problem without any additional assumptions (e.g., see the discussion in \cite[Section 4.3]{xiao2022convergence}). 

\begin{defn}[Inherited from \cite{khodadadian2021linear} Definitions 3.1 and 3.2]
	The set of of dummy states $\gS_d$ is defined as $\gS_d = \{s \in\gS \left. \right| Q^*(s, a) = V^*(s), \forall a \in \gA\}$. That is, $\gS_d$ is the set of states where playing any actions is optimal. 
	And we define the optimal advantage function gap $\Delta$ is defined as follows:
	\begin{equation}\label{eq:Delta}
		\Delta := \min_{s \not\in \gS_d}\min_{a \not\in\gA} \left(Q^*(s,a) - V^*(s)\right) =\min_{s \not\in \gS_d}\min_{a \not\in\gA_s^*} \left(Q^*(s,a) - \min_{a' \in\gA} Q^*(s,a')\right)>0,
	\end{equation}
	where $\gA_s^* = \{a \in\gA\left.\right| Q^*(s,a) = \min_{a' \in\gA} Q^*(s,a')\}$.
\end{defn}
 
\begin{remark}
	We need to underline that $\Delta \leq \frac{1}{1-\gamma}$, and $\Delta$ is MDP-dependent, which may be very small as our experiments shown.
\end{remark}

\begin{thm}\label{thm:frame}
	Consider the policy mirror descent with $\pi(0) \in \mathrm{ri}(\Pi)$.
	Suppose the step sizes are a non-decreasing sequence $\{\eta_k\} $ with $\eta_k>0, \forall k\geq 0$, and we already have 
	\begin{equation}\label{eq:linear}
		Q_{s,a}(\pi^{(k)}) - Q^*_{s, a} \leq A_k, \forall s \in \gS, a \in \gA,
	\end{equation}
	where $\{A_k\}$ is a decreasing sequence such that $\lim_{k \to +\infty} A_k=0$.
	Then we further have the following convergent results of policy.
	\begin{enumerate}
		\item Suppose $\nabla h(p)$ is well-defined for all $p \in \Delta(\gA)$, and we define $M := \sup_{a \in \gA, p \in \Delta(\gA)} |\nabla_a h(p)|$. Then if $K = \inf\{k \in \sN: \eta_k (\Delta-A_k)>4M \}<+\infty$, we have $\forall k \geq K,  s \in \gS$, $\pi_s^{(k+1)} \in \Pi_s^*$ and $V_s(\pi^{(k+1)}) = V_{s}^*$. That is, PMD with Bergman divergence define by $h(\cdot)$ could stop after \textbf{finite} iterations.
		\item Suppose $\partial h(p)$ is not well-defined for all $p \in \partial \Delta(\gA)$. Then we have $\forall s \in\gS_d, a \not\in \gA_s^*, b \in \gA_s^*$, for all $ k \geq 0, \pi_s^{(k+1)} \in \mathrm{ri}(\Delta(\gA))$ and
		\[ \nabla_a h(\pi_s^{(k+1)}) - \nabla_b h(\pi_s^{(k+1)})  \leq \nabla_a h(\pi_s^{(0)}) - \nabla_b h(\pi_s^{(0)}) - \left(\sum_{i=0}^k \eta_i \Delta - \sum_{i=0}^{k}\eta_i A_i\right). \]
	\end{enumerate}
\end{thm}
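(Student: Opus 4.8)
The plan is to extract everything from the first-order optimality condition \eqref{eq:opt-cond} together with the explicit description of the normal cone \eqref{eq:normal-cone}. For a fixed state $s$ and iteration $k$, I would introduce the per-action quantity $n_a := \nabla_a h(\pi_s^{(k)}) - \nabla_a h(\pi_s^{(k+1)}) - \eta_k Q_{s,a}(\pi^{(k)})$, so that \eqref{eq:opt-cond} says precisely that the vector $(n_a)_{a\in\gA}$ lies in $N_{\Delta(\gA)}(\pi_s^{(k+1)})$: the coordinates $n_a$ share a common maximal value on $\mathrm{supp}(\pi_s^{(k+1)})$ and are no larger off the support. The one global fact I would establish first is the monotonicity $Q_{s,a}(\pi^{(k)}) \geq Q^*_{s,a}$, which is immediate from \eqref{eq:Q} and $V_{s'}(\pi) \geq V_{s'}^*$; combined with the hypothesis \eqref{eq:linear} this pins the perturbed $Q$-values into the window $Q^*_{s,a} \leq Q_{s,a}(\pi^{(k)}) \leq Q^*_{s,a} + A_k$.

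For the finite-termination claim in item~1, I would compare a suboptimal action $a \notin \gA_s^*$ with an optimal one $b \in \gA_s^*$ at a non-dummy state. Using $Q^*_{s,b} = V_s^*$ and $Q^*_{s,a} \geq V_s^* + \Delta$, the window above gives $Q_{s,a}(\pi^{(k)}) - Q_{s,b}(\pi^{(k)}) \geq \Delta - A_k$. Since $\nabla h$ is defined on all of $\Delta(\gA)$ and bounded by $M$ in absolute value, the four gradient terms in $n_a - n_b$ contribute at most $4M$, so $n_a - n_b \leq 4M - \eta_k(\Delta - A_k)$. When $\eta_k(\Delta - A_k) > 4M$ this forces $n_a < n_b$, which is incompatible with $a$ belonging to the support, where $n$ is maximal. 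Hence $\mathrm{supp}(\pi_s^{(k+1)}) \subseteq \gA_s^*$, i.e. $\pi_s^{(k+1)} \in \Pi_s^*$ and $V_s(\pi^{(k+1)}) = V_s^*$. Because $\{\eta_k\}$ is non-decreasing and $\{A_k\}$ decreasing, the product $\eta_k(\Delta - A_k)$ is non-decreasing, so the threshold survives for every $k \geq K$, giving termination from step $K$ onward.

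For item~2, the hypothesis that $\partial h$ blows up on $\partial\Delta(\gA)$ (the Legendre/barrier property already invoked after \eqref{eq:pmd-pi}) forces every iterate into $\mathrm{ri}(\Delta(\gA))$, so $\mathrm{supp}(\pi_s^{(k+1)}) = \gA$ and the normal-cone inclusion collapses to the equalities $n_a = n_b$ for all actions. Taking a suboptimal $a$ and an optimal $b$ and rearranging $n_a = n_b$ turns it into a one-step recursion for the gradient gap $\delta^{(k)} := \nabla_a h(\pi_s^{(k)}) - \nabla_b h(\pi_s^{(k)})$, namely $\delta^{(k)} - \delta^{(k+1)} = \eta_k\big(Q_{s,a}(\pi^{(k)}) - Q_{s,b}(\pi^{(k)})\big) \geq \eta_k(\Delta - A_k)$. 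Telescoping from $0$ to $k$ yields exactly the stated bound $\nabla_a h(\pi_s^{(k+1)}) - \nabla_b h(\pi_s^{(k+1)}) \leq \nabla_a h(\pi_s^{(0)}) - \nabla_b h(\pi_s^{(0)}) - \big(\sum_{i=0}^k \eta_i\Delta - \sum_{i=0}^k \eta_i A_i\big)$.

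The step I expect to be most delicate is the translation of the optimality condition into a usable statement about the support: one must read \eqref{eq:normal-cone} correctly, that the coordinates are equal on the support and dominate the off-support ones, and be careful that the comparison action $b$ is genuinely optimal so that the gap $\Delta$ applies. The remaining inputs, namely the $Q$-monotonicity, the uniform bound $M$ on $\nabla h$, and the monotonicity of $\eta_k(\Delta - A_k)$, are routine once the optimality condition is set up, and the telescoping in item~2 is purely mechanical.
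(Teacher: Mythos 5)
Your proposal is correct and follows essentially the same route as the paper's proof: the normal-cone form of the optimality condition, the monotonicity $Q_{s,a}(\pi^{(k)})\geq Q^*_{s,a}$ combined with \eqref{eq:linear} to get the gap $\Delta-A_k$, the $4M$ bound on the gradient terms for item~1, and the collapse to equalities plus telescoping for item~2. The only (cosmetic) differences are that you deduce just $\mathrm{supp}(\pi_s^{(k+1)})\subseteq\gA_s^*$ — which is all that is needed and is in fact slightly cleaner than the paper's claim of exact equality — and that you telescope the coordinate differences directly rather than the full vector identity first.
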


\begin{proof}
	The proofs mainly apply the arguments in Eq.~\eqref{eq:opt-cond1} and \eqref{eq:opt-cond}.
	\begin{enumerate}
		\item Since $ |\nabla_a h(p)| \leq M, \forall a \in \gA, p \in \Delta(\gS)$, then for any $s \in\gS_d, a \not\in \gA_s^*, b \in \gA_s^*$, we have
		\begin{equation}\label{eq:g-g}
			\left| \left(\nabla_a h(\pi_s^{(k+1)})-\nabla_b h(\pi_s^{(k+1)})\right) -\left(\nabla_a h(\pi_s^{(k)}) -\nabla_b h(\pi_s^{(k)})\right) \right| \leq 4M.
		\end{equation}
		Additionally, from Eq.~\eqref{eq:opt-cond}, we obtain
		\begin{equation}\label{eq:n-n}
			\eta_k \left[Q_{s,a}(\pi^{(k)})-Q_{s,b}(\pi^{(k)})\right] + \left[\nabla_a h(\pi_s^{(k+1)})-\nabla_b h(\pi_s^{(k+1)})\right] -\left[\nabla_a h(\pi_s^{(k)})-\nabla_b h(\pi_s^{(k)})\right] = n_{s,b}^{(k+1)}-n_{s,a}^{(k+1)}.
		\end{equation}
		for some $\vn = (n_{s,1}^{(k+1)},\dots,n_{s, |\gA|}^{(k+1)}) \in N_{\Delta(\gA)}(\pi_s^{(k+1)})$.
		Moreover, by the convergence of $Q_{s,a}(\pi^{(k)})$ in Eq.~\eqref{eq:linear}, we obtain
		\[ Q_{s,a}(\pi^{(k)})-Q_{s,b}(\pi^{(k)}) \geq Q_{s,a}^*-Q_{s,b}(\pi^{(k)}) \geq Q_{s,a}^*-Q_{s,b}^*-A_k \stackrel{\eqref{eq:Delta}}{\geq} \Delta-A_k, \]
		where the first inequality follows from $V_s(\pi) \geq V_s^*, \forall s \in \gS, \pi \in \Pi$.
		When $k \geq K$, we have
		\[ \eta_k \left[Q_{s,a}(\pi^{(k)})-Q_{s,b}(\pi^{(k)})\right] \geq \eta_k \left(\Delta-A_k\right) > 4M. \]
		Combining the above inequality with Eqs.~\eqref{eq:g-g} and \eqref{eq:n-n}, we get $n_{s,b}^{(k+1)}-n_{s,a}^{(k+1)}>0$. From the concrete expression of $ N_{\Delta(\gA)}(\pi_s^{(k+1)}) $ in Eq.~\eqref{eq:normal-cone}, we obtain $b \in \mathrm{supp}(\pi^{(k+1)}_s)$ and $a \not\in \mathrm{supp}(\pi^{(k+1)}_s)$.
		Hence, we conclude $\gA_s^* \subseteq \mathrm{supp}(\pi^{(k+1)}_s), (\gA_s^*)^c \subseteq \mathrm{supp}(\pi^{(k+1)}_s)^c$, that is $\mathrm{supp}(\pi_{s}^{(k+1)})=\gA_s^*$, i.e., $\pi_{s}^{(k+1)} \in \Pi_s^*$.

		\item Since $\partial h(p)$ is not well-defined on $p \in \partial \Delta(\gA)$.
		from \cite[Theorem 27.4]{rockafellar2015convex}, we can see $\forall k\geq 0,  \pi_s^{(k)} \in \mathrm{ri}(\Delta(\gA))$. 
		Now applying Eq.~\eqref{eq:opt-cond} and $\forall k\geq 0, \pi_{s}^{(k+1)} \in \mathrm{ri}(\Delta(\gA))$, we get
		\begin{equation}
			\eta_k Q_s(\pi^{(k)}) + \nabla h(\pi_s^{(k+1)})-\nabla h(\pi_s^{(k)}) = c_k \cdot \bm{1}_{|\gA|}
		\end{equation}
		for some $c_k \in\sR$. Telescoping from $k$ to $0$, we further derive that
		\[ \sum_{i=0}^k \eta_i Q_s(\pi^{(i)})+ \nabla_a h(\pi_s^{(k+1)}) - \nabla_a h(\pi_s^{(0)}) = \sum_{i=0}^k c_i \cdot \bm{1}_{|\gA|}. \]
		Similarly, for any $s \in\gS_d, a \not\in \gA_s^*, b \in \gA_s^*$, we have
		\begin{equation}\label{eq:g-a-b}
			\nabla_a h(\pi_s^{(k+1)}) - \nabla_b h(\pi_s^{(k+1)}) = \nabla_a h(\pi_s^{(0)}) - \nabla_b h(\pi_s^{(0)}) - \sum_{i=0}^k \eta_i \left[Q_{s,a}(\pi^{(i)})-Q_{s,b}(\pi^{(i)}) \right].
		\end{equation}
		Employing the convergence of $Q_{s,a}(\pi^{(k)})$ in Eq.~\eqref{eq:linear}, we have
		\begin{equation*}
		\begin{aligned}
			&\sum_{i=0}^k \eta_i \left[Q_{s,a}(\pi^{(i)})-Q_{s,b}(\pi^{(i)}) \right] \geq \sum_{i=0}^k \eta_i \left[Q_{s,a}^*-Q_{s,b}(\pi^{(i)}) \right]
			\stackrel{\eqref{eq:linear}}{\geq} \sum_{i=0}^k \eta_i \left[Q_{s,a}^*-Q_{s,b}^* - A_i\right] \stackrel{\eqref{eq:Delta}}{\geq} \sum_{i=0}^k \eta_i \Delta - \sum_{i=0}^{k}\eta_i A_i.
		\end{aligned}
		\end{equation*}
		Replacing the above bound to Eq.~\eqref{eq:g-a-b}, we finish the proof.
\end{enumerate}
\end{proof}

\begin{remark}
    We make some remarks for better understanding assumptions in Theorem \ref{thm:frame}.
    \begin{itemize}
    	\item Assumption 1 indicates that $\forall p \in \partial \Delta(\gA), \nabla h(p)$ exists, and $\nabla h(\cdot)$ is bounded everywhere on $\Delta(\gA)$. A sufficient condition for this assumption is that $\Delta(\gA) \subset \mathrm{int}(\mathrm{dom}(h))$ based on \cite[Theorem 23.4]{rockafellar2015convex}. The suitable divergences include Pearson $\chi^2$-divergence (the squared Euclidean distance), Tsallis divergence with entropic-index $q>1$.
    	\item Assumption 2 implies that $\partial h(p), \forall  p \in \partial \Delta(\gA)$ is ill-defined. 
    	That is, we obtain for some $a \in \gA$, $\lim_{p \to \partial \Delta(\gA)} \nabla_a h(p) \to \infty$. The suitable divergences include KL-divergence, Tsallis divergence with entropic-index $q<1$, squared Hellinger distance, $\alpha$-divergence, Jensen-Shannon divergence, Neyman $\chi ^{2}$-divergence.
    \end{itemize}
    However, we need to emphasis that not all these divergences are tractable in solving Eq.~\eqref{eq:pmd-pi}. 
    The common choices in previous work are the squared Euclidean distance and KL-divergence \cite{xiao2022convergence,li2022homotopic,lan2021policy}.
\end{remark}

In the first case of Theorem \ref{thm:frame}, we need for some $K\geq 0$, $\eta_K \Delta \geq \eta_K (\Delta-A_K)>4M $. Such a requirement could be satisfied for exponential steps sizes, but is unreasonable for constant learning rates ($\eta_k=\eta, \forall k \geq 0$) because we may have a large lower bound for $\eta = \eta_K \geq 4M/\Delta$ when $\Delta$ is small. We show that such constraint can be removed when applied to a smooth function $h(\cdot)$.

\begin{thm}\label{thm:better}
	Consider the policy mirror descent with $\pi(0) \in \Pi$.
	Suppose the step sizes are constant $\eta_k=\eta>0, \forall k\geq 0$, and Eq.~\eqref{eq:linear} holds.
	Assume that $h(\cdot)$ is continuously differentiable on $\Delta(\gA)$ and $L$-cocoercive under the norm $\norm{\cdot}$ on $\Delta(\gA)$:
	\[ \forall p,q \in \Delta(\gA), \langle \nabla h(p)-\nabla h(q), p-q \rangle \geq \frac{1}{L} \norm{\nabla h(p)-\nabla h(q)}^2. \]
	Then PMD with Bergman divergence define by $h(\cdot)$ could stop after \textbf{finite} iterations.
\end{thm}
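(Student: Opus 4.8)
The plan is to reduce the whole statement to showing that the \emph{gradient increments} $\nabla h(\pi_s^{(k+1)}) - \nabla h(\pi_s^{(k)})$ vanish as $k\to\infty$, and then to reuse the support argument from the first part of Theorem~\ref{thm:frame}. Indeed, identity \eqref{eq:n-n} (with $\eta_k\equiv\eta$) reads, for any $s\notin\gS_d$, $a\notin\gA_s^*$ and $b\in\gA_s^*$,
\[ n_{s,b}^{(k+1)} - n_{s,a}^{(k+1)} = \eta\big[Q_{s,a}(\pi^{(k)}) - Q_{s,b}(\pi^{(k)})\big] + \big[\nabla_a h(\pi_s^{(k+1)}) - \nabla_a h(\pi_s^{(k)})\big] - \big[\nabla_b h(\pi_s^{(k+1)}) - \nabla_b h(\pi_s^{(k)})\big], \]
and by \eqref{eq:linear}--\eqref{eq:Delta} the first bracket is at least $\eta(\Delta-A_k)$. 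The obstruction in Theorem~\ref{thm:frame} was that the two gradient differences were only controlled by the crude bound $4M$, which with constant $\eta$ forces $\eta\gtrsim 4M/\Delta$. If instead the gradient increments tend to $0$, the right-hand side converges to $\eta\Delta>0$, hence is eventually positive; by the normal-cone description \eqref{eq:normal-cone} this forces $b\in\mathrm{supp}(\pi_s^{(k+1)})$ and $a\notin\mathrm{supp}(\pi_s^{(k+1)})$, i.e.\ $\mathrm{supp}(\pi_s^{(k+1)})\subseteq\gA_s^*$ and so $\pi_s^{(k+1)}\in\Pi_s^*$.

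To make the increments vanish I would first extract a summability estimate from monotone policy improvement. Taking $p=\pi_s^{(k)}$ in \eqref{eq:opt-cond1} gives $\langle Q_s(\pi^{(k)}),\pi_s^{(k)}-\pi_s^{(k+1)}\rangle\geq 0$, so the performance difference lemma \cite{Kakade02approximatelyoptimal} (as in Proposition~\ref{prop:V-pi}) shows $V_\rho(\pi^{(k)})$ is non-increasing; together with the sublinear convergence of the value under constant step sizes it decreases to $V_\rho^*$. Writing the per-step decrease via the performance difference lemma and using the elementary bound $d_{\rho,s'}(\pi^{(k+1)})\geq(1-\gamma)\rho_{s'}$ on the discounted visitation distribution, each decrease dominates $\sum_{s'}\rho_{s'}\langle Q_{s'}(\pi^{(k)}),\pi^{(k)}_{s'}-\pi^{(k+1)}_{s'}\rangle$. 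Telescoping yields $\sum_{k}\sum_{s'}\rho_{s'}\langle Q_{s'}(\pi^{(k)}),\pi^{(k)}_{s'}-\pi^{(k+1)}_{s'}\rangle\leq V_\rho(\pi^{(0)})-V_\rho^*<\infty$. Since $\rho$ has full support (already needed for $r_\rho<\infty$), this forces $\langle Q_s(\pi^{(k)}),\pi_s^{(k)}-\pi_s^{(k+1)}\rangle\to 0$ for every fixed $s$.

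Cocoercivity then converts this into vanishing increments. Setting $p=\pi_s^{(k)}$ in \eqref{eq:opt-cond1} and rearranging gives $\eta\langle Q_s(\pi^{(k)}),\pi_s^{(k)}-\pi_s^{(k+1)}\rangle\geq\langle\nabla h(\pi_s^{(k+1)})-\nabla h(\pi_s^{(k)}),\pi_s^{(k+1)}-\pi_s^{(k)}\rangle\geq\tfrac1L\norm{\nabla h(\pi_s^{(k+1)})-\nabla h(\pi_s^{(k)})}^2$, so the previous step yields $\norm{\nabla h(\pi_s^{(k+1)})-\nabla h(\pi_s^{(k)})}\to 0$, and by equivalence of norms on $\sR^{|\gA|}$ each coordinate difference vanishes. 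Plugging this into the displayed identity and using $A_k\to 0$, the right-hand side tends to $\eta\Delta>0$ uniformly over the finitely many triples $(s,a,b)$; hence there is a finite $K$ with $n_{s,b}^{(k+1)}>n_{s,a}^{(k+1)}$ for all $k\geq K$ and all such triples, giving $\pi_s^{(k+1)}\in\Pi_s^*$ for every $s$ (states in $\gS_d$ being automatically optimal). Thus $\pi^{(k+1)}\in\Pi^*$ for all $k\geq K$, the claimed finite termination.

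I expect the main obstacle to be the summability step. For constant step sizes the value converges only sublinearly, so $\sum_k A_k$ diverges and the naive telescoping of the gap bound fails. The resolution is to telescope the \emph{value decrease} rather than the gap, paying for the state weights through $d_{\rho,s'}\geq(1-\gamma)\rho_{s'}$; this is exactly where smoothness/cocoercivity, rather than a large step size, supplies the missing control on the gradient increments.
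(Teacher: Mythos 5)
Your proposal is correct, and it follows the paper's overall skeleton (show the gradient increments $\nabla h(\pi_s^{(k+1)})-\nabla h(\pi_s^{(k)})$ vanish via cocoercivity, then rerun the normal-cone support argument from the first case of Theorem~\ref{thm:frame} with the crude $4M$ bound replaced by an $o(1)$ term), but you prove the key intermediate claim $\langle Q_s(\pi^{(k)}),\pi_s^{(k)}-\pi_s^{(k+1)}\rangle\to 0$ by a genuinely different route. The paper deduces it pointwise from the convergence of the iterates themselves: since $Q(\pi^{(k)})\to Q^*$ forces $\pi^{(k)}_{s,a}\to 0$ for $a\notin\gA_s^*$, it splits the inner product over $a\notin\gA_s^*$ (each term vanishes because the non-optimal probability mass vanishes) and $b\in\gA_s^*$ (where $Q_{s,b}(\pi^{(k)})$ tends to a common constant $Q_{s,b}^*$ and the probabilities telescope to $1-1=0$). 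You instead establish it by monotone policy improvement: each term is nonnegative by the optimality condition, the performance difference lemma together with $d_{\rho,s'}(\pi^{(k+1)})\geq(1-\gamma)\rho_{s'}$ shows the per-step value decrease dominates $\sum_{s'}\rho_{s'}\langle Q_{s'}(\pi^{(k)}),\pi^{(k)}_{s'}-\pi^{(k+1)}_{s'}\rangle$, and telescoping gives summability, hence convergence to zero for every state once $\rho$ has full support. Your route buys a quantitative summability statement (stronger than mere convergence to zero) and avoids having to first argue that $\|\pi^{(k)}-\Pi^*\|_\infty\to 0$; its cost is the extra reliance on the full-support assumption on $\rho$ and on the performance difference lemma, neither of which the paper's version of this step needs (though full support is already required elsewhere to make $r_\rho<\infty$ in Proposition~\ref{prop:Q-V}). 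Your closing observation correctly identifies why the naive summation of the gap bounds $A_k$ fails under constant step sizes and why telescoping the value decrease is the right substitute; the remainder of your argument (eventual positivity of $n_{s,b}^{(k+1)}-n_{s,a}^{(k+1)}$ uniformly over the finitely many triples, hence $\mathrm{supp}(\pi_s^{(k+1)})\subseteq\gA_s^*$ and $\pi^{(k+1)}\in\Pi^*$) matches the paper.
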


\begin{proof}
	From Eq.~\eqref{eq:opt-cond1} with $p=\pi^{(k)}_s$, we can see
	\[ \langle \eta Q_s(\pi^{(k)}), \pi_s^{(k)}-\pi_s^{(k+1)} \rangle \geq \langle \nabla h(\pi_s^{(k+1)}) - \nabla h(\pi_s^{(k)}), \pi_s^{(k+1)}-\pi_s^{(k)} \rangle \geq \frac{1}{L} \norm{\nabla h(\pi_s^{(k+1)})-\nabla h(\pi_s^{(k)})}^2. \]
	Since $Q_{s,a}(\pi^{(k)}) \to Q_{s,a}^*$, then we get $\norm{\pi^{(k)}-\Pi^*}_{\infty} \to 0$, that is $\pi^{(k)}_{s,a} \to 0, \forall s \in\gS_d, a \not\in \gA_s^*$.
	Moreover, note that $Q_{s, b}^* = Q_{s, c}^*, \forall s \in\gS, b,c \in \gA_s^*$.
	Therefore, we obtain
	\begin{equation}\label{eq:q-pi}
		\begin{aligned}
			\lim_{k \to \infty} \langle \eta Q_s(\pi^{(k)}), \pi_s^{(k)}-\pi_s^{(k+1)}\rangle &= \lim_{k \to \infty} \sum_{a \not\in \gA_s^*} \eta Q_{s,a}(\pi^{(k)}) (\pi_{s,a}^{(k)}-\pi_{s,a}^{(k+1)}) + \sum_{b \in \gA_s^*} \eta Q_{s,b}(\pi^{(k)}) (\pi_{s,b}^{(k)}-\pi_{s,b}^{(k+1)}) \\
			&= 0 + \lim_{k \to \infty} \eta Q_{s,b}^* \sum_{b \in \gA_s^*}(\pi_{s,b}^{(k)}-\pi_{s,b}^{(k+1)}) = \eta Q_{s,b}^*(1-1) = 0.
		\end{aligned}
	\end{equation}
	Hence, we derive that
	\begin{equation}\label{eq:grad-k}
		\lim_{k \to \infty} \nabla h(\pi_s^{(k+1)})-\nabla h(\pi_s^{(k)}) = 0. 
	\end{equation}
	By the convergence of $Q_{s,a}(\pi^{(k)})$ in Eq.~\eqref{eq:linear}, we obtain
	\begin{equation}\label{eq:cond-2}
		Q_{s,a}(\pi^{(k)})-Q_{s,b}(\pi^{(k)}) \geq Q_{s,a}^*-Q_{s,b}(\pi^{(k)}) \geq Q_{s,a}^*-Q_{s,b}^*-A_k \stackrel{\eqref{eq:Delta}}{\geq} \Delta-A_k=\Delta+o(1).
	\end{equation}
	Then from Eq.~\eqref{eq:opt-cond}, we obtain $\forall s \in\gS_d, a \not\in \gA_s^*, b \in \gA_s^*$,
	\begin{equation}\label{eq:n-n-1}
		\begin{aligned}
			n_{s,b}^{(k+1)}-n_{s,a}^{(k+1)} & \ = \eta_k \left[Q_{s,a}(\pi^{(k)})-Q_{s,b}(\pi^{(k)})\right] + \left(\nabla_a h(\pi_s^{(k+1)})-\nabla_a h(\pi_s^{(k)})\right) -\left(\nabla_b h(\pi_s^{(k+1)}) -\nabla_b h(\pi_s^{(k)})\right) \\
			&\stackrel{\eqref{eq:grad-k}}{=} \eta_k \left[Q_{s,a}(\pi^{(k)})-Q_{s,b}(\pi^{(k)})\right]+o(1)
			\stackrel{\eqref{eq:cond-2}}{\geq} \eta\Delta+o(1).
		\end{aligned}
	\end{equation}
	Since $|\gS|, |\gA|$ are finite, we get for a large enough $K$, we have that $ n_{s,b}^{(K+1)} - n_{s,a}^{(K+1)} > 0, \forall s \in\gS_d, a \not\in \gA_s^*, b \in \gA_s^* $. The remaining proof is the same the first case of Theorem \ref{thm:frame}.
\end{proof}

\begin{remark}
	We need to emphasize that although we extend the suitable scope of learning rates in Theorem \ref{thm:better}, but we do not give the explicit steps for finite-step convergence as Theorem \ref{thm:frame} shown.
	Additionally, the satisfied function $h(\cdot)$ in Theorem \ref{thm:better} includes all twice continuously differential convex functions on $\Delta(\gA)$. We show the reason below.  Note that
	\[ \forall p,q \in \Delta(\gA), \nabla h(p)-\nabla h(q) = \left[\int_{0}^1 \nabla^2 h(q+t(p-q))dt\right] (p-q)  := J(p,q)(p-q). \]
	Since $\Delta(\gA)$ is a compact set and $h(\cdot)$ is a twice continuously differential convex function, we have for some $M > 0$ that $M \cdot I \succeq \nabla^2 h(p) \succeq 0, \forall p \in \Delta(\gA)$. Thus we get $M \cdot I \succeq J(p-q) \succeq 0$. Therefore
	\[ \langle \nabla h(p)-\nabla h(q), p-q\rangle =(p-q)^\top J(p,q) (p-q) \geq \frac{1}{M} (p-q)^\top J(p,q)^\top J(p,q)(p-q) = \frac{1}{M}\norm{\nabla h(p)-\nabla h(q)}_2^2. \]
	Hence, $h(x)$ are $M$-cocoercive under the norm $\norm{\cdot}_2$ on $\Delta(\gA)$.
\end{remark}

Finally, from Proposition \ref{prop:V-pi}, we also can convert the convergence of policy in Theorem \ref{thm:frame} to value function. We omit the detail here, but show the numerical results to support our viewpoint.

\subsection{An Example of Euclidean Distance}
The new discovery in our note appears in the finite-step-convergence of policy.
To give a better understanding, we show the example of Euclidean distance here.
We have the update rule as 
\[ \pi_s^{(k+1)} = \bm{\mathrm{proj}}_{\Delta(\gA)} \left\{\pi_s^{(k)} - \eta_k Q_s(\pi^{(k)})\right\}, \forall s \in \gS. \]
The projection to probability simplex has the formulation as
\[ \vx^+ = \bm{\mathrm{proj}}_{\Delta(\gA)} \{\vx\} = (\vx+\alpha\bm{1})_+ \]
for some $\alpha \in \sR$ to satisfy $\langle \vx^+, \bm{1} \rangle =1$, and $(x)_+=\max\{0, x\}$.
Hence, we can see for any $i \neq j \in [|\gA|]$, if $x_i-x_j>1$, then $x_j^+=0$. Otherwise, $(x_j+\alpha)_+ = x_j^+>0$, we get $x_i+\alpha > x_j+\alpha > 0$, and $x_i^+ = x_i+\alpha > 1 + x_j + \alpha > 1$, which contradicts the requirement $\langle \vx^+, \bm{1} \rangle =1$ since $\forall k \in [|\gA|], x_k^+ \geq 0$.

Now we turn back to the update of $\pi_s^{(k)}$. Since from previous work \cite{xiao2022convergence}, we already have the convergence of $Q_s(\pi^{(k)}) \to Q_s^*$. Hence, for large enough $k$, any $a \not\in \gA_s^*, b \in \gA_s^*$ we have that $Q_{s,a}(\pi^{(k)}) -  Q_{s,b}(\pi^{(k)}) = Q_{s,a}^* - Q_{s,b}^* +o(1) \geq \Delta + o(1)$. Therefore, we derive that
\[ \eta_k \left(Q_{s,a}(\pi^{(k)}) -  Q_{s,b}(\pi^{(k)})\right) = \eta_k \left(Q_{s,a}^* - Q_{s,b}^*+o(1)\right) \geq \eta_k \left(\Delta+o(1)\right). \]
Thus, for large enough $k$ and $\eta_k$ to guarantee $\eta_k \left(\Delta+o(1)\right)>1$, we have $\eta_k \left(Q_{s,a}(\pi^{(k)}) -  Q_{s,b}(\pi^{(k)})\right)>1$ leading to $\pi^{(k+1)}_{s, a}=0$, i.e., $\mathrm{supp}(\pi^{(k+1)}_s) \in \gA_s^*$. Hence, we conclude that $\pi^{(k+1)} \in \Pi^*$.

\subsection{Application to Common Divergence}
Next, we see some common Bergman divergences and give explicit rates of policy.
Though we mention generalized Bregman divergence defined by a general convex function $h(\cdot)$, we usually adopt continuously differentiable $h(\cdot)$ in practice. 

\begin{enumerate}
	\item The squared Euclidean distance: $h(p) = \frac{1}{2}\norm{p}_2^2, D(p,p')=\frac{1}{2}\norm{p-p'}^2$, which satisfies the first case of Theorem \ref{thm:frame} with $M=1$. That is, policy converges to an optimal policy after finite steps. 
	The update rule is
	\[ \pi_s^{(k+1)} = \bm{\mathrm{proj}}_{\Delta(\gA)} \left\{\pi_s^{(k)} - \eta_k Q_s(\pi^{(k)})\right\}, \forall s \in \gS. \]
	\begin{itemize}
		\item Constant learning rate $\eta_k = \eta \geq \frac{8}{\Delta}$. Note that $D_0^* \leq 1, A_k = \frac{\gamma r_{\rho}}{k+1}\left(\frac{1}{\eta(1-\gamma)}+\frac{1}{(1-\gamma)^2}\right)$. To guarantee $\eta_k(\Delta-A_k) > 4M$, we only need $A_k \leq \Delta/2$ and $\eta_k\Delta>8M$. Hence, we obtain $K = \left\lceil \frac{2r_{\rho}}{\Delta}\left(\frac{1}{\eta(1-\gamma)}+\frac{1}{(1-\gamma)^2}\right) \right\rceil = O\left(\frac{r_{\rho}}{\Delta(1-\gamma)^2}\right)$.
		\item Exponential learning rate $\eta_{k} = \left(\frac{\vartheta_\rho}{\vartheta_\rho-1}\right)^k \cdot \eta_0, \eta_0=O(1)$. We have $A_k = \gamma r_{\rho}\left(1-\frac{1}{\vartheta_\rho}\right)^k \left(\frac{1}{1-\gamma}+\frac{1}{\eta_0\gamma}\right)$.
		Solving $\eta_k(\Delta-A_k) > 4M $, we obtain
		$K= \left\lceil\vartheta_{\rho}\ln \frac{4+\gamma r_{\rho}\left(\frac{1}{1-\gamma}+\frac{1}{\eta_0\gamma}\right)}{\eta_0\Delta} \right\rceil = O\left(\vartheta_{\rho}\ln \frac{ r_{\rho} }{\eta_0\Delta(1-\gamma)} \right)$.
	\end{itemize}
	\item Kullback-Leibler (KL) divergence $h(p) = \sum_{a \in \gA}p_a\log p_a, D(p,p')=\sum_{a \in \gA}p_a\log \frac{p_a}{p_a'}$, which satisfies the second case of Theorem \ref{thm:frame} because $\lim_{p_a \to 0} \partial_a h(p) = \lim_{p_a \to 0} \log p_a+1 = -\infty$. The update rule is
	\[ \pi_s^{(k+1)} \propto \pi_s^{(k)} \cdot e^{- \eta_k Q_s(\pi^{(k)})}. \]
	For brevity, we adopt $\pi_s^{(0)}$ as the uniform distribution on $\gA, \forall s \in\gS$ in the following.
	\begin{itemize}
		\item Constant learning rate $\eta_k = \eta $. Note that $D_0^* \leq \ln |\gA|, A_k = \frac{\gamma r_{\rho}}{k+1}\left(\frac{\ln |\gA|}{\eta(1-\gamma)}+\frac{1}{(1-\gamma)^2}\right)$. Then we obtain
		\[ \ln \frac{\pi_{s, a}^{(k)}}{\pi_{s, b}^{(k)}} \leq -\left(\sum_{i=0}^{k-1} \eta_i \Delta - \sum_{i=0}^{k-1}\eta_i A_i\right) = -k\eta\Delta+(1+\ln k) \gamma r_{\rho}\left(\frac{\ln |\gA|}{\eta(1-\gamma)}+\frac{1}{(1-\gamma)^2}\right). \]
		Hence, we obtain
		\[ \norm{\pi^{(k)}-\Pi^*}_{\infty} \leq 2|\gA| e^{-k\eta\Delta(1+o(1))}. \]
	    A similar result also appears in \citet{khodadadian2021linear}.
		\item Exponential learning rate $\eta_{k} = \left(\frac{\vartheta_\rho}{\vartheta_\rho-1}\right)^k \cdot \eta_0, \eta_0=O(1)$. Note that $A_k = \gamma r_{\rho}\left(1-\frac{1}{\vartheta_\rho}\right)^k \left(\frac{\ln |\gA|}{1-\gamma}+\frac{1}{\eta_0\gamma}\right)$.
		\[ \ln \frac{\pi_{s, a}^{(k)}}{\pi_{s, b}^{(k)}}  \leq -\left(\sum_{i=0}^{k-1} \eta_i \Delta - \sum_{i=0}^{k-1}\eta_i A_i\right) \leq k \eta_0 \gamma r_{\rho} \left(\frac{\ln |\gA|}{1-\gamma}+\frac{1}{\eta_0\gamma}\right) - \left(\frac{\vartheta_\rho}{\vartheta_\rho-1}\right)^k \cdot \eta_0 \Delta. \]
		Hence, we obtain
		\[ \norm{\pi^{(k)}-\Pi^*}_{\infty} \leq 2|\gA| e^{-\left(\frac{\vartheta_\rho}{\vartheta_\rho-1}\right)^k\eta_0\Delta(1+o(1))}. \]
		A similar result also appears in \citet{li2022homotopic}.
	\end{itemize}
	\item Tsallis divergence: $h(p) = \frac{\sum_{a \in \gA} p_a^q-1}{q-1}, q>0$. When $q \to 1$, the Tsallis entropy converges to the negative Shannon entropy. 
	Generally, the update rule under this divergence is \textcolor{red}{intractable}:
	\[ \pi_s^{(k+1)} = \left[(\pi_s^{(k)})^{q-1}-\frac{\eta_k Q_s(\pi^{(k)})}{q}+\vn_s^{(k)}\right]^{\frac{1}{q-1}}, \pi_s^{(k+1)} \in \Delta(\gA). \]
	We briefly mention the results following Theorem \ref{thm:frame}.
	\begin{itemize}
		\item $q>1$, finite step stops.
		\item $0<q<1$, 
		\begin{itemize}
		    \item Constant learning rates: $\norm{\pi^{(k)}-\Pi^*}_{\infty} = O\left(\left(\frac{1}{k}\right)^{\frac{1}{1-q}}\right)$; 
		    \item Exponential learning rates: $\norm{\pi^{(k)}-\Pi^*}_{\infty} = O\left(\left(1-\frac{1}{\vartheta_\rho}\right)^{\frac{k}{1-q}}\right)$
		\end{itemize}
	\end{itemize}
	\item Other common divergences also seem intractable.
\end{enumerate}

\begin{table}
	\centering
	\begin{tabular}{cccc}
		\toprule
		Bregman divergence & Learning rate & Rates & Complexity in Each Step \\
		\midrule
		KL & Constant    & sublinear + linear  & $O(|\gS|\cdot|\gA|)$ \\
		KL & Exponential & linear + superlinear & $O(|\gS|\cdot|\gA|)$ \\
		\midrule
		Euclidean & Constant    & sublinear + finite steps & $O(|\gS|\cdot|\gA|\ln|\gA|)$ \\
		Euclidean & Exponential & linear + finite steps & $O(|\gS|\cdot|\gA|\ln|\gA|)$ \\
		\midrule
		Tsallis & Constant    & sublinear (+ finite steps if $q>1$)  & intractable \\
		Tsallis & Exponential & linear (+ finite steps if $q>1$) & intractable \\
		\bottomrule
	\end{tabular}
	\caption{Summary of convergence under different steps sizes and Bregman divergence.}
\end{table}

\section{Experiments}
In this section, we conduct toy experiments to verify our findings of convergence.
We construct a MDP by randomly sampling $P, R$ with entries from $\mathrm{Unif}(0,1)$, and then normalize $P$ to be a probability transition matrix.
We set $|\gS|=20, |\gA|=100$ and choose the target $V_{\rho}(\cdot)$ with $\rho=\bm{1}_{|\gS|}, \gamma=0.999$ through our experiments.
We adopt two kinds of learning rates. One is exponential learning rates with $\eta_k = \gamma^{-k}\eta_0, \forall k\geq 0$, another is constant learning rates $\eta_k=\eta_0, \forall k\geq0$.
For simplification, we adopt uniform distribution as the initial policy, i.e., $\pi_{s,a}^{(0)}=1/|\gA|, \forall s \in \gS, a \in \gA$.
We also list some related quantities in our theorem: $\Delta = 0.0012, \ln\frac{1}{\gamma} = 0.0010005$.
Since the MDP with adaptive regularization has similar rates \cite{lan2021policy, li2022homotopic}, we also try these methods with same settings for fair comparison.
We choose the regularization in each step as $\{\tau_k\}$, and $1+\eta_k\tau_k = 1/\gamma, \forall k \geq 0$ following \cite[Theorem 2.1]{li2022homotopic}. The update of regularized MDP is 
\begin{equation}\label{eq:pmd-pi-reg}
    \pi_s^{(k+1)} = \mathop{\arg\min}_{p \in \Delta(\gA)} \left\{ \eta_k \left[\langle Q_s(\pi^{(k)}), p\rangle + \tau_k D(p,\pi^{(0)}_s) \right] + D(p, \pi_s^{(k)}) \right\}, \forall s \in \gS.
\end{equation}
Then for Euclidean distance, the update rule is
\[ \pi_s^{(k+1)} = \bm{\mathrm{proj}}_{\Delta(\gA)} \left\{\frac{\pi_s^{(k)} - \eta_k Q_s(\pi^{(k)})+\eta_k\tau_k\pi^{(0)}_s}{1+\eta_k\tau_k} \right\}, \forall s \in \gS. \]
And for KL divergence, the update rule is
\[ \pi_s^{(k+1)} \propto (\pi_s^{(k)} \cdot e^{-\eta_k Q_s(\pi^{(k)})})^{1/(1+\eta_k\tau_k)} , \forall s \in \gS. \]

\begin{figure}[t]
	\centering
	\begin{subfigure}[b]{0.48\textwidth}
		\includegraphics[width=\linewidth]{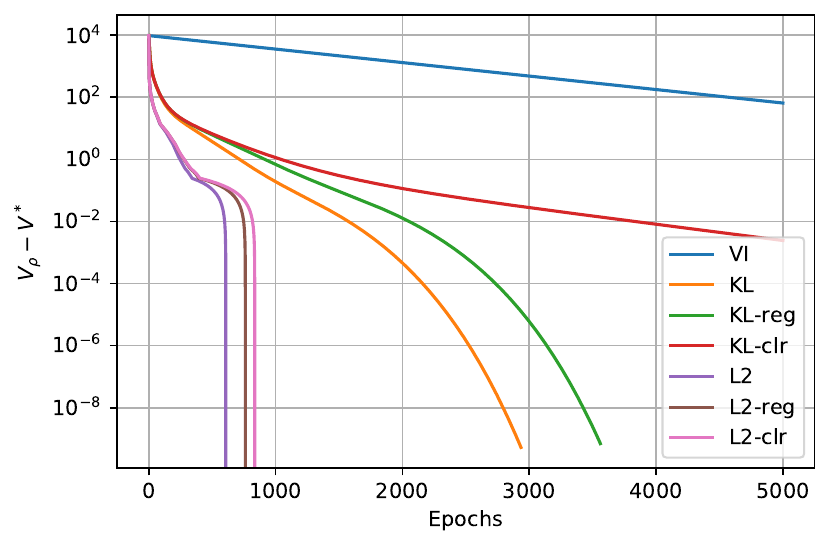}
		\caption{$\eta_0=1$.} \label{fig:eta-1}
	\end{subfigure}
	\begin{subfigure}[b]{0.48\textwidth}
		\includegraphics[width=\linewidth]{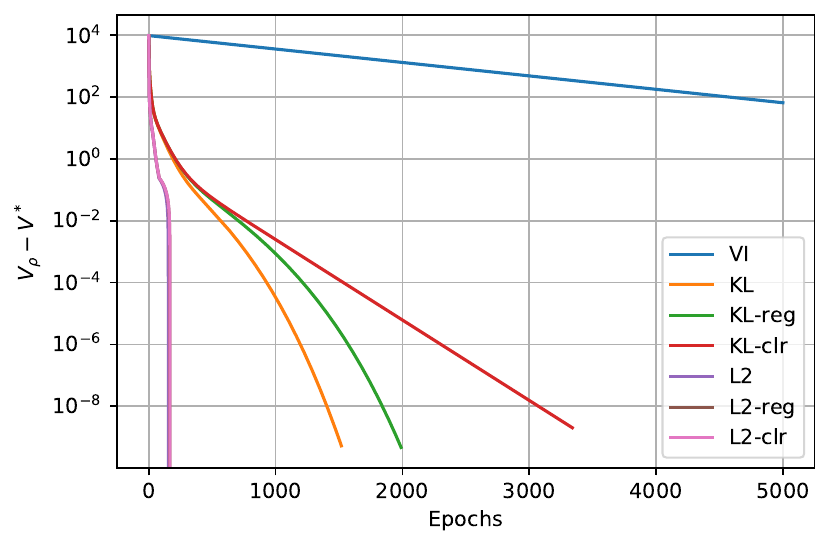}
		\caption{$\eta_0=5$.} \label{fig:eta-5}
	\end{subfigure}
	\label{fig:rate}
	\caption{L2: the squared Euclidean distance; KL: Kullback-Leibler (KL) divergence. Default: exponential learning rates. `reg': using adaptive regularization $\tau_k$ with $1 + \eta_k\tau_k = 1/\gamma$. VI: value iteration. `clr': constant learning rate.}
\end{figure}

We briefly show some observation based on our experiments.
\begin{enumerate}
	\item KL divergence 
	\begin{itemize}
		\item From $\eta_0\Delta \geq \ln\frac{1}{\gamma}$, we have `KL-clr' is faster than VI for large $k$.
		\item Under the same (exponential) learning rates, employing regularization has less benefit than no regularization.
		\item Convergence rates under constant learning rates: sublinear + linear, under exponential learning rates: linear + superlinear.
	\end{itemize}
	\item L2 distance ($\chi^2$-divergence)
	\begin{itemize}
		\item Finite step convergence.
		\item Under the same (exponential) learning rates, employing regularization has less benefit than no regularization.
		\item Using L2 is clearly faster than KL divergence, but each step of update complexity is not the same ($\log|\gA|$ worse).
	\end{itemize}
\end{enumerate}

\bibliography{reference}
\bibliographystyle{plainnat}
\end{document}